\numberwithin{equation}{section}
\newtheorem{Theorem}{Theorem}[section]
\newtheorem{Corollary}[Theorem]{Corollary}
 { \theoremstyle{definition}
\newtheorem{Remark}[Theorem]{Remark} }
\begin{document}

\allowdisplaybreaks

\newcommand{\arXivNumber}{1602.03693}

\renewcommand{\PaperNumber}{063}

\FirstPageHeading

\ShortArticleName{Symmetries of Lorentzian Three-Manifolds with Recurrent Curvature}

\ArticleName{Symmetries of Lorentzian Three-Manifolds\\ with Recurrent Curvature}

\Author{Giovanni CALVARUSO~$^\dag$ and Amirhesam ZAEIM~$^\ddag$}

\AuthorNameForHeading{G.~Calvaruso and A.~Zaeim}

\Address{$^\dag$~Dipartimento di Matematica e Fisica ``E. De Giorgi'', Universit\`a del Salento,\\
\hphantom{$^\dag$}~Prov. Lecce-Arnesano, 73100 Lecce, Italy}
\EmailD{\href{mailto:giovanni.calvaruso@unisalento.it}{giovanni.calvaruso@unisalento.it}}

\Address{$^\ddag$~Department of Mathematics, Payame Noor University, P.O. Box 19395-3697, Tehran, Iran}
\EmailD{\href{mailto:zaeim@pnu.ac.ir}{zaeim@pnu.ac.ir}}

\ArticleDates{Received February 12, 2016, in f\/inal form June 17, 2016; Published online June 26, 2016}

\Abstract{Locally homogeneous Lorentzian three-manifolds with recurrect curvature are special examples of Walker manifolds, that is, they admit a parallel null vector f\/ield. We obtain a full classif\/ication of the symmetries of these spaces, with particular regard to symmetries related to their curvature: Ricci and matter collineations, curvature and Weyl col\-lineations. Several results are given for the broader class of three-dimensional Walker manifolds.}

\Keywords{Walker manifolds; Killing vector f\/ields; af\/f\/ine vector f\/ields; Ricci collineations; curvature and Weyl collineations; matter collineations}

\Classification{53C50; 53B30}

\section{Introduction}

A {\em Walker manifold} is a pseudo-Riemannian manifold $(M,g)$ admitting a degenerate parallel distribution. Such a phenomenon is peculiar to the case of indef\/inite metrics. As such, it is responsible for many special geometric properties of pseudo-Riemannian manifolds which do not have any Riemannian counterpart, and has been investigated by several authors under dif\/ferent points of view. The monograph~\cite{bookW} is a well-written recent survey on Walker manifolds and the various related research areas.

Lorentzian three-manifolds admitting a parallel degenerate line f\/ield have been studied in~\cite{ChGRVA}. These Lorentzian metrics are described in terms of a suitable system of local coordinates $(t,x,y)$ and form a large class, depending on an arbitrary function $f(t,x,y)$. The case of {\em strictly Walker manifolds}, where the parallel degenerate line f\/ield is spanned by a parallel null vector f\/ield, is characterized by condition $f=f(x,y)$. The results of~\cite{ChGRVA} have been recently used in~\cite{GGN} to obtain a complete classif\/ication of the models of locally homogeneous Lorentzian three-manifolds with recurrent curvature.

The aim of this paper is to investigate symmetries of these Lorentzian spaces. If $(M,g)$ denotes a Lorentzian manifold and $T$ a tensor on $(M,g)$, codifying some either mathematical or physical quantity, a {\em symmetry} of~$T$ is a one-parameter group of dif\/feomorphisms of $(M,g)$, leaving~$T$ invariant. As such, it corresponds to a vector f\/ield $X$ satisfying $\mathcal L_X T=0$, where~$\mathcal L$ denotes the Lie derivative. Isometries are a well known example of symmetries, for which $T=g$ is the metric tensor. The corresponding vector f\/ield~$X$ is then a Killing vector f\/ield. Homotheties and conformal motions on $(M,g)$ are again examples of symmetries. In recent years, symmetries related to the curvature of the manifold have been investigated. Among them: {\em curvature collineations} (where $T$=$R$ is the curvature tensor), {\em Weyl collineations} ($T$=$W$ being the Weyl conformal curvature tensor) and {\em Ricci collineations}, for which $T$=$\varrho$ is the Ricci tensor. We may refer to the monograph~\cite{Hall} for further information and references on symmetries. Ricci {and curvature} collineations have been investigated in several classes of Lorentzian manifolds (see, for example, \cite{Heic,CZ1,CZ2,CSVV,CHK,FPP,Ha,HC,HLP,HRV,KR,L,TA} and references therein). Because of their physical relevance, in most cases curvature symmetries have been studied for some space-times. Moreover, the three-dimensional case has also been considered as an interesting source of examples and dif\/ferent behaviours (see, for example,~\cite{CSVV}).

A {\em matter collineation} of a Lorentzian manifold $(M,g)$ is a vector f\/ield $X$, corresponding to a symmetry of the energy-momentum tensor $T = \varrho- \frac{1}{2}\tau g$, where $\tau$ denotes the scalar curvature. Matter collineations are more relevant from a physical point of view~\cite{CS,CCV}, while Ricci collineations have a more clear geometrical signif\/icance, since $\varrho$ is naturally deduced from the connection of the metric~\cite{HRV}. These physical and geometrical meanings do coincide in a~special case, namely, for metrics with vanishing scalar curvature. And this is exactly the case for any strictly Walker three-manifold~\cite{ChGRVA}.

We shall obtain complete classif\/ications of curvature and Ricci ($\equiv$ matter) collineations of homogeneous Lorentzian three-manifolds with recurrent curvature. In Section~\ref{section2} we shall give some basic information about Walker three-manifolds and curvature symmetries. In Section~\ref{section3} we then investigate symmetries of an arbitrary strictly Walker three-manifold. Since the function $f=f(x,y)$ determining the metric tensor here is arbitrary, one cannot expect to obtain these symmetries explicitly in the general case. However, we describe the sets of partial dif\/ferential equations describing the dif\/ferent symmetries and use them to give some explicit examples of proper symmetries. Then, in Section~\ref{section4} we shall completely classify the symmetries of homogeneous Lorentzian three-manifolds with recurrent curvature.
All calculations have also been checked using {\em Maple16$^\copyright$}.

\section{Preliminaries}\label{section2}

\subsection{Three-dimensional Walker metrics}

We shall essentially follow the notations used in \cite{ChGRVA}. A three-dimensional Lorentzian mani\-fold~$M$ admitting a parallel degenerate line f\/ield has local coordinates $(t,x,y)$, such that with respect to the local frame f\/ield
$\{ \partial_t ,\partial_x,\partial_y \}$ the Lorentzian metric is given by
\begin{gather*}
g_f =\left(
\begin{matrix}
0 & 0 & 1 \\
0 & \varepsilon & 0 \\
1 & 0 & f(t,x,y)
\end{matrix}
\right),
\end{gather*}
for some function $f(t,x,y)$. In the above expression, $\varepsilon =\pm 1$. However, it is easily seen that by reversing the metric and changing the sign of the coordinate~$x$, without loss of generality one can reduce to the case $\varepsilon=1$ (as it was done, for example, in~\cite{GGN}).

The parallel degenerate line f\/ield is spanned by $\partial_{t}$, and the existence of a parallel null vector $U = \partial_{t}$ ({\em strictly Walker metric}) is {characterized} by the independence of {the function $f$} of the variable~$t$~\cite{W}. Therefore, with respect to local coordinates~$(t,x,y)$, the general form of a strictly Walker metric is given by
\begin{gather}\label{metric}
g_f =\left(
\begin{matrix}
0 & 0 & 1 \\
0 & 1 & 0 \\
1 & 0 & f(x,y)
\end{matrix}
\right),
\end{gather}
for an arbitrary smooth function $f$. With respect to the coordinate basis $\{\partial_t,\partial_x,\partial_y\}$, the Levi-Civita connection $\nabla$ and curvature $R$ of the metric $g_f$ described by~\eqref{metric} are completely determined by the following possibly non-vanishing components (see also~\cite{ChGRVA}):
\begin{gather}\label{connection}
\nabla_{\partial_x} \partial_y = \tfrac{1}{2} f_x \partial_t, \qquad \nabla_{\partial_y} \partial_y = \tfrac{1}{2} f_y \partial_t - \tfrac{1}{2} f_x \partial_x
\end{gather}
and
\begin{gather}\label{curvature}
R(\partial_x,\partial_y)\partial_x = \tfrac{1}{2} f_{xx} \partial_t, \qquad R(\partial_x,\partial_y)\partial_y = -\tfrac{1}{2} f_{xx} \partial_x,
\end{gather}
where $R(X,Y)=[\nabla_X,\nabla_Y]-\nabla_{[X,Y]}$. From \eqref{connection} and \eqref{curvature}, a straightforward calculation yields that the covariant derivative of the curvature tensor is completely determined by the possibly non-vanishing components
\begin{alignat}{3}
& (\nabla _{\partial_x} R) (\partial_x,\partial_y)\partial_x=\tfrac{1}{2} f_{xxx} \partial_t,\qquad&& (\nabla _{\partial_x} R) (\partial_x,\partial_y)\partial_y=-\tfrac{1}{2} f_{xxx} \partial_x, & \nonumber\\
& (\nabla_{\partial_y} R) (\partial_x,\partial_y)\partial_x=\tfrac{1}{2} f_{xxy} \partial_t ,\qquad && (\nabla_{\partial_y} R) (\partial_x,\partial_y)\partial_y=-\tfrac{1}{2} f_{xxy} \partial_x .& \label{dcurvature}
\end{alignat}
Either by \eqref{dcurvature} or by direct calculation dif\/ferentiating the Ricci identity, it is easily seen that three-dimensional (strictly) Walker metrics have {\em recurrent curvature}, that is, in a neighborhood of any point of non-vanishing curvature, one has $\nabla R =\omega \otimes R$, for a suitable one-form $\omega$. Since we are interested in the study of the nonf\/lat examples with recurrent curvature, throughout the paper we shall assume that $f_{xx} \neq 0$ at any point.

In local coordinates $(t,x,y)$, the Ricci tensor $\varrho$ of any metric \eqref{metric} is given by
\begin{gather}\label{Ricci}
\varrho = \left( \begin{matrix}
0 & 0 & 0 \\
0 & 0 & 0 \\
0 & 0 & -\frac{1}{2} f_{xx}
\end{matrix} \right).
\end{gather}
A pseudo-Riemannian manifold $(M,g)$ is said to be {\em locally homogeneous} if for any pair of points $p,q \in M$ there exist a neighbourhood~$U$ of~$p$, a neighbourhood $V$ of $q$ and an isometry $\phi\colon U\to V$. Hence, locally homogeneous manifolds \lq\lq look the same\rq\rq \ around each point. For any given class of pseudo-Riemannian manifolds, it is a natural problem to determine its locally homogeneous examples. {Locally homogeneous} examples among three-dimensional Walker metrics have been investigated in~\cite{GGN} (see also~\cite{BCL}). Rewriting the classif\/ication obtained in \cite{GGN} in terms of coordinates $(t,x,y)$ used in~\eqref{metric}, we have the following.

\begin{Theorem}[\cite{GGN}]\label{homex}
Locally homogeneous Lorentzian three-manifolds of recurrent curvature naturally divide into three classes. They correspond to one of the following types of $($strictly$)$ Walker metrics, as described in~\eqref{metric}:
\begin{itemize}\itemsep=0pt
\item[$I)$] $\mathcal N _b$, defined by taking $f(x,y)=-2b^{-2}e^{bx}$, for some real constant $b \neq 0$;
\item[$II)$] $\mathcal P _c$, defined by taking $f(x,y)=-x^{2}\alpha (y)$, where $\alpha >0$ satisfies {$\alpha ' _y=c \alpha^{3/2}$} for some real constant $c$;
\item[$III)$] $\mathcal{CW} _{\varepsilon}$, defined by taking $f(x,y)=-\varepsilon x^2$, where $\varepsilon =\pm 1$.
\end{itemize}
\end{Theorem}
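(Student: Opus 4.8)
The plan is to reduce the classification to an ordinary differential equation for the single function $f$ and then solve it case by case, exploiting the fact that the entire local geometry of a strictly Walker metric \eqref{metric} is encoded in $f_{xx}$. By the preliminaries, a non-flat Lorentzian three-manifold with recurrent curvature is locally a strictly Walker manifold, so I may assume the metric is in the normal form \eqref{metric}. The first step is to record the canonical isometry-invariants that are available. On the open set where the curvature does not vanish, the recurrence one-form $\omega$ defined by $\nabla R=\omega\otimes R$ is intrinsically (hence isometry-) well defined, and comparing \eqref{curvature} with \eqref{dcurvature} gives $\omega=(f_{xxx}/f_{xx})\,dx+(f_{xxy}/f_{xx})\,dy$ with $\omega(\partial_t)=0$. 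Using the inverse of \eqref{metric} (for which $g^{xx}=1$, $g^{xy}=g^{yy}=0$) one finds $\|\omega\|_g^2=(f_{xxx}/f_{xx})^2$. Since local homogeneity forces every scalar curvature invariant to be constant along the isometry orbits, I would first impose $\|\omega\|_g^2=b^2$ for a constant $b\ge0$, and split the analysis according to whether $b\ne0$ or $b=0$.

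If $b\ne0$, the condition reads $f_{xxx}=\pm b\,f_{xx}$, an ODE in $x$ whose solution is $f_{xx}=A(y)e^{\pm bx}$. Absorbing the sign into $b$ and using the residual coordinate freedom that preserves \eqref{metric} (a reparametrisation of $y$ with a compensating rescaling of $t$, together with the removal of the terms of $f$ that are affine in $x$, which do not affect \eqref{curvature}), together with the remaining homogeneity constraints, I would normalise $A(y)$ to a constant and recover $f=-2b^{-2}e^{bx}$, i.e.\ the family $\mathcal N_b$ of type~$I$.

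If $b=0$, then $f_{xxx}=0$, so $f_{xx}$ depends on $y$ alone; after discarding the affine-in-$x$ part of $f$ this yields $f=-x^2\alpha(y)$ with $\alpha=-\tfrac12 f_{xx}$. Here the first-order invariant is blind to the $y$-direction because $g^{yy}=0$, so I would pass to the next invariant extracted from $\nabla^2R=(\nabla\omega+\omega\otimes\omega)\otimes R$, which involves $\omega_y=\alpha'/\alpha$ and its covariant derivative computed from \eqref{connection}. Requiring this datum to be constant under the isometry action produces the scaling equation $\alpha'_y=c\,\alpha^{3/2}$; its non-constant solutions $\alpha=(C-\tfrac{c}{2}y)^{-2}$ give the family $\mathcal P_c$ of type~$II$, while the degenerate sub-case $\alpha\equiv\mathrm{const}$ (normalised to $\alpha=\varepsilon=\pm1$) gives $f=-\varepsilon x^2$, the Cahen--Wallach family $\mathcal{CW}_\varepsilon$ of type~$III$.

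Finally I would establish sufficiency by checking that each of the three families is genuinely locally homogeneous, exhibiting in each case a local isometry group acting transitively. The parallel null field $\partial_t$ is always Killing, since the metric \eqref{metric} is $t$-independent; the defining condition found in each branch then guarantees two further independent Killing fields whose values at a point complete $\partial_t$ to a basis of the tangent space, which proves transitivity. The main obstacle is the $b=0$ analysis: because $g^{yy}=0$ and the Ricci operator read off from \eqref{Ricci} is nilpotent, the elementary invariants cannot detect the $y$-direction, so isolating the exact ODE $\alpha'_y=c\,\alpha^{3/2}$ (rather than a weaker consequence) and cleanly separating type~$II$ from type~$III$ requires the second-order computation above together with a careful accounting of the coordinate changes preserving \eqref{metric}.
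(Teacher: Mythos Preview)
The paper does not supply a proof of Theorem~\ref{homex}; the result is quoted from~\cite{GGN} and used as input for the rest of the paper, so there is no in-paper argument to compare against. I therefore assess your plan on its own merits.

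Your first reduction is sound. The recurrence one-form $\omega$ is an isometry invariant, and with the inverse metric one indeed gets $\|\omega\|_g^2=(f_{xxx}/f_{xx})^2$, which must be constant on a locally homogeneous manifold; this cleanly separates the cases $b\ne0$ and $b=0$. In the $b\ne0$ branch your sketch can be completed essentially as you describe: the shift $x\mapsto x+b^{-1}\log|A(y)|$ (with the compensating change in $t$) already normalises $f_{xx}=A(y)e^{bx}$ to $f_{xx}=\mathrm{const}\cdot e^{bx}$, and the residual affine-in-$x$ part is removed by a further change of Walker coordinates; so here the single invariant $b^2$ suffices.

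The genuine gap is the $b=0$ branch. With $f=-x^2\alpha(y)$ one has $\omega=(\alpha'/\alpha)\,dy$, and the only nonzero component of $\nabla\omega+\omega\otimes\omega$ is the $(y,y)$ one, equal to $\alpha''/\alpha$. Since $g^{yy}=g^{xy}=0$ and none of the tensors $R,\nabla R,\omega,\nabla\omega,\ldots$ carry a lower $t$-index, every scalar obtained from them and $g$ by full contraction vanishes identically: these are VSI-type metrics. In particular, ``requiring this datum to be constant'' cannot produce $\alpha'_y=c\,\alpha^{3/2}$ (note that $\alpha''/\alpha$ is \emph{not} constant for $\alpha=(C-\tfrac{c}{2}y)^{-2}$). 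Local homogeneity here is invisible to scalar invariants; one needs the stronger pointwise condition that the curvature \emph{models} $(T_pM,g_p,R_p,\nabla R_p,\ldots)$ are mutually linearly isometric, which is the approach of~\cite{GGN}, or equivalently one must impose transitivity of the Killing algebra directly. If you take the latter route using Theorem~\ref{nablasymm}(i) with $f=-x^2\alpha(y)$, the $x^2$-coefficient of~\eqref{eqKill} reads $(c_1y+c_2)\alpha'+2c_1\alpha=0$; requiring a Killing field with $X_3\not\equiv0$ (needed for transitivity in the $\partial_y$-direction) forces $\alpha=\mathrm{const}\cdot(c_1y+c_2)^{-2}$, which is precisely $\alpha'_y=c\,\alpha^{3/2}$, while $c_1=0$ gives $\alpha$ constant and hence $\mathcal{CW}_\varepsilon$. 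Your sufficiency step (exhibiting a transitive Killing algebra for each family) is correct in outline and is carried out explicitly in Section~\ref{section4}.
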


\subsection{Curvature and Ricci collineations}

Let $(M,g)$ denote a pseudo-Riemannian manifold (in particular, a Lorentzian one). A vector f\/ield~$X$ on~$M$ preserving its metric tensor $g$, the corresponding Levi-Civita connection $\nabla$, its curvature tensor $R$ or its Ricci tensor $\varrho$, is respectively known as a {\em Killing vector field}, an {\em affine vector field}, a~{\em curvature collineation} or a~{\em Ricci collineation}.

It is obvious that if $X$ preserves $g$ (respectively,~$\nabla$,~$R$), then it also preserves $\nabla$ (respective\-ly,~$R$,~$\varrho$), but the converse does not hold in general. Homothetic vector f\/ields (i.e., vector f\/ields~$X$ satisfying $\mathcal L _X g=\lambda g$ for some real constant $\lambda$) are again necessarily curvature collineations (in particular, Ricci collineations). For this reason, we are specif\/ically interested in the existence of {\em proper} Ricci and curvature collineations, namely, the ones which are not homothetic (and hence, not Killing). Thus, we also need to specify which are the Killing, af\/f\/ine and homothetic vector f\/ields, which is an interesting problem on its own, due to the natural geometric meaning of such symmetries.

Conditions def\/ining Ricci and curvature collineations are formally similar to the ones def\/ining Killing or af\/f\/ine vector f\/ields. However, they may show some deeply dif\/ferent behaviours. In fact (see, for example, \cite{Hall,HRV}):
\begin{itemize}\itemsep=0pt
\item[(a)] Killing and af\/f\/ine vector f\/ields are smooth (provided they are at least $C^1$). However, for any positive integer $k$, there exist Lorentzian metrics admitting Ricci {(and curvature)} collineations, which are $C^k$ but not $C^{k+1}$.
\item[(b)] Unlike Killing and af\/f\/ine vector f\/ields, Ricci (and curvature) collineations form a vector space which may be inf\/inite-dimensional and (because of the above point~(a)) is not necessarily a Lie algebra. In fact, if $X$, $Y$ are Ricci (curvature) collineations, then $[X,Y]$ might not be dif\/ferentiable.
\item[(c)] While Killing and af\/f\/ine vector f\/ields agreeing in the neighbourhood of a point must coincide everywhere, two Ricci (respectively, curvature) collineations that agree on an non-empty subset of~$M$ may not agree on~$M$, since they are not uniquely determined by the value of~$X$ and its covariant derivatives of any order at a point.
\end{itemize}

Observe that the above item~(b), as concerns the possibility of the vector space of Ricci collineations to be inf\/inite-dimensional, refers to cases where the Ricci tensor $\varrho$ is necessarily degenerate (as it is always the case, for example, for three-dimensional strictly Walker metrics). On the other hand, if $\varrho$ (respectively, $T=\varrho-\frac{1}{2}\tau g$) is nondegenerate, then Ricci (respectively, matter) collineations form a f\/inite-dimensional Lie algebra of smooth vectors. In fact, in such a~case, they are exactly the Killing vector f\/ields of the nondegenerate metric tensor $\varrho$.

\section{Symmetries of Walker three-manifolds}\label{section3}

Observe that any three-dimensional strictly Walker metric is already equipped in a natural way with the strongest possible symmetry. In fact, it admits a {\em parallel} vector f\/ield $U=\partial_t$. We shall now investigate the dif\/ferent kinds of symmetries of these metrics, starting with Killing, homothetic and af\/f\/ine vector f\/ields. The descriptions we obtain are given in the following.

\begin{Theorem}\label{nablasymm}
Let $(M,g_f)$ be a three-dimensional strictly Walker manifold, where $g_f$ is described in the local coordinates $(t,x,y)$ by equation~\eqref{metric}. A~smooth vector field $X=X_1\partial_t+X_2\partial_x+X_3\partial_y$ is
\begin{itemize}\itemsep=0pt
\item[$i)$] a Killing vector field if and only if
\begin{gather}\label{eqKillvector}
X_1=-c_1t-{xf_1 '(y)}+f_2(y),\qquad X_2=f_1(y),\qquad X_3=c_1y+c_2,
\end{gather}
where $f_1$, $f_2$ are smooth functions on $M$, satisfying
\begin{gather}\label{eqKill}
2c_1f-2f_1''(y)x+2f_2 '(y)+f_1(y)\partial_xf+(c_1y+c_2)\partial_yf.
\end{gather}

\item[$ii)$] a homothetic, non-Killing vector field if and only if
\begin{gather}\label{eqHomvector}
X_1=\eta t-c_1t- {xf_1 '(y)}+f_2(y),\qquad X_2=\frac\eta2 x+f_1(y),\qquad X_3=c_1y+c_2,
\end{gather}
where $\eta \neq 0$ is a real constant and
\begin{gather}\label{eqHom}
(2c_1-\eta)f-2f_1''(y)x+2f_2 '(y)+\left(\frac\eta2x+f_1(y)\right)\partial_xf+(c_1y+c_2)\partial_yf=0.
\end{gather}

\item[$iii)$] an affine Killing vector field if and only if
\begin{gather}\label{eqAffvector}
 X_1=c_3t-xf_1 '(y)+f_2(y),\qquad X_2=\frac{c_1+c_3}{2}x+f_1(y),\qquad X_3=c_1y+c_2,
\end{gather}
where
\begin{gather}
(c_1-c_3)f-2f_1 ''(y)x+2f_2 '(y)+\left(\frac{c_1+c_3}{2}x+f_1(y)\right)\partial_xf\nonumber\\
\qquad{} +(c_1y+c_2)\partial_yf+c_4=0.\label{eqAff}
\end{gather}
\end{itemize}
\end{Theorem}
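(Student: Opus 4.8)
The three parts will all follow from a single computation, namely the six independent components of the deformation tensor $\mathcal L_X g$. Writing $X=X_1\partial_t+X_2\partial_x+X_3\partial_y$ and using the coordinate identity
\[
(\mathcal L_X g)_{ij}=X^k\partial_k g_{ij}+g_{kj}\partial_i X^k+g_{ik}\partial_j X^k
\]
together with the explicit metric \eqref{metric}, I would first record these components. The essential feature is that $(\mathcal L_X g)_{tt}=2\partial_t X_3$, $(\mathcal L_X g)_{tx}=\partial_t X_2+\partial_x X_3$ and $(\mathcal L_X g)_{xx}=2\partial_x X_2$ involve no $f$, while all the dependence on the free function sits in
\[
(\mathcal L_X g)_{yy}=X_2 f_x+X_3 f_y+2\partial_y X_1+2f\partial_y X_3 .
\]
For part $i)$ I set all six components to zero; for part $ii)$ I set them equal to the entries of $\eta g$; and for part $iii)$ I invoke the classical fact that, for the Levi-Civita connection, $X$ is affine if and only if $\mathcal L_X g$ is parallel, so that the right-hand sides become the entries of an arbitrary $\nabla$-parallel symmetric tensor.

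For the Killing case I would integrate the system in the order dictated by its triangular structure. Vanishing of the $tt$, $xx$ and $tx$ components shows that $X_3$ is independent of $t$, that $X_2$ is independent of $x$, and hence that $X_3$ is affine in $x$, say $X_3=A(y)x+B(y)$; the $ty$ component then forces $X_1$ to be affine in $t$. Substituting into the $xy$ component and demanding independence of $t$ already gives $A'=0$, so $A$ is constant. The decisive step is the $yy$ component: its $t$-linear part reads $A f_x+2B''(y)=0$, and since the recurrent-curvature hypothesis guarantees $f_{xx}\neq0$, the factor $f_x$ genuinely depends on $x$, which is impossible unless $A=0$ and $B''=0$. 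Hence $X_3=c_1y+c_2$ and $X_2=f_1(y)$, after which the remaining equations integrate to $X_1=-c_1t-xf_1'(y)+f_2(y)$ and reduce the $yy$ component exactly to the constraint \eqref{eqKill}, proving \eqref{eqKillvector}.

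The homothetic case $ii)$ is the same integration carried out with $\eta g$ on the right-hand side. The only changes are that the $xx$ component now gives $\partial_x X_2=\tfrac\eta2$, hence the extra term $\tfrac\eta2 x$ in $X_2$, and that the $ty$ and $yy$ components acquire the constants $\eta$ and $\eta f$; the same $f_{xx}\neq0$ argument again kills the $x$-linear part of $X_3$, and one arrives at \eqref{eqHomvector}--\eqref{eqHom}. Requiring $\eta\neq0$ is exactly what makes the field proper, i.e.\ non-Killing.

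For the affine case $iii)$ the extra ingredient is to determine all $\nabla$-parallel symmetric $2$-tensors $h$. Since $U=\partial_t$ is parallel and $U^\flat=dy$, both $g$ and $U^\flat\otimes U^\flat=dy\otimes dy$ are parallel; solving $\nabla h=0$ directly from \eqref{connection}, and once more using $f_{xx}\neq0$ to force the off-diagonal and anomalous entries to vanish, shows that these span the whole space, so $\mathcal L_X g=\alpha g+\beta\,dy\otimes dy$ for constants $\alpha,\beta$. This right-hand side differs from the homothetic one only by the extra constant $\beta$ in the $yy$ slot, so the very same integration yields \eqref{eqAffvector} with $\alpha=c_1+c_3$ and reduces the $yy$ equation to \eqref{eqAff} with $c_4=-\beta$. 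I expect the main obstacle to be precisely this identification of the parallel symmetric tensors: it is the one place where a genuinely new computation (the system $\nabla h=0$) is needed, and it is again the condition $f_{xx}\neq0$ that rules out the spurious solutions and pins down the two-parameter family.
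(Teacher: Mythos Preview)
Your treatment of parts $i)$ and $ii)$ coincides with the paper's: both compute the components of $\mathcal L_X g-\eta g$ in the coordinate frame, integrate in the same triangular order, and use the standing hypothesis $f_{xx}\neq0$ at the two decisive places (to kill the $x$-linear term in $X_3$ and then to force $X_3$ to be affine in~$y$).

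For part $iii)$, however, you take a genuinely different route. The paper expands $\mathcal L_X\nabla=0$ directly in the coordinate basis, obtaining an eleven-equation second-order system which it integrates step by step, again invoking $f_{xx}\neq0$ several times to eliminate spurious parameters. You instead use the classical equivalence, valid for the Levi-Civita connection, between $\mathcal L_X\nabla=0$ and $\nabla(\mathcal L_Xg)=0$, and first determine the space of $\nabla$-parallel symmetric $2$-tensors: since $\partial_t$ is parallel with $\partial_t^{\flat}=dy$, both $g$ and $dy\otimes dy$ are parallel, and the computation (where $f_{xx}\neq0$ is again the key) shows there is nothing else. The affine equations then reduce to the homothetic system with one extra constant in the $yy$-slot, so the integration of part~$ii)$ is recycled verbatim. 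Your approach is shorter and more conceptual, and it explains transparently why the affine family exceeds the homothetic one by exactly one real parameter (the constant $c_4$ in~\eqref{eqAff}); the paper's direct computation, on the other hand, is self-contained and does not rely on the equivalence $\mathcal L_X\nabla=0\Leftrightarrow\nabla(\mathcal L_Xg)=0$ or on the separate classification of parallel tensors.
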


\begin{proof}
We start from an arbitrary smooth vector f\/ield $X=X_1\partial_t+X_2\partial_x+X_3\partial_y$ on the three-dimensional strict Walker manifold $(M,g_f)$, where $g_f$ is described by equation~\eqref{metric}, and calculate $\mathcal L_Xg_f$. Then, $X$ satisf\/ies $\mathcal L_Xg_f=\eta g_f$ for some real constant $\eta$ if and only if the following system of partial dif\/ferential equations is satisf\/ied:
\begin{gather}
\partial_tX_3=0,\qquad \partial_xX_2=\frac\eta2,\qquad \partial_xX_3+\partial_tX_2=0,\qquad
\partial_yX_3+\partial_t X_3f+\partial_tX_1=\eta,\nonumber\\
 f\partial_xX_3+\partial_xX_1+\partial_yX_2=0,\qquad 2\partial_yX_3f+2\partial_yX_1+X_2\partial_xf+X_3\partial_yf=\eta f.\label{sys1}
\end{gather}
We then proceed to integrate \eqref{sys1}. From the f\/irst three equations in \eqref{sys1} we get $X_2=\frac\eta2x-a_1(y)t+f_1(y)$ and $X_3=a_1(y)x+b_1(y)$. Then, the fourth equation in \eqref{sys1} yields $X_1=\eta t-a_1 '(y) tx-b_1 '(y)t+f_4(x,y)$. Substituting this into the f\/ifth equation, we get
\begin{gather*}2a_1 '(y)t=a_1(y)f+\partial_xf_4(x,y)+f_2 '(y),\end{gather*}
which must hold for all values of $t${, implying} that $a_1(y)=c_1$ is a constant. Now, the last equation in~\eqref{sys1} gives
\begin{gather*}
(c_1\partial_xf+2b_1 ''(y))t =(2b_1 '(y)-\eta f)f+2\partial_yf_4(x,y)+\left(\frac\eta2 x+f_2(y)\right)\partial_xf \\
\hphantom{(c_1\partial_xf+2b_1 ''(y))t =}{} +(c_1x+f_3(y))\partial_yf,
\end{gather*}
which immediately yields that $c_1\partial_xf+2b_1 ''(y)=0$ and so, $c_1\partial_{xx}^2f=0$. Since we assumed $\partial_{xx}^2f\neq 0$, we then have $c_1=0$ and integrating $b_1 ''(y)=0$ we get $b_1(y)=c_2y+c_3$. On the other hand, from the f\/ifth equation in~\eqref{sys1} we now have $f_4(x,y)=-f_2 '(y)x+f_5(y)$ and the last equation gives
\begin{gather*}
(2c_2-\eta)f-2f_2 ''(y)x+2f_5 '(y)+\left(\frac\eta2 x+f_2(y)\right)\partial_xf+(c_2y+c_3)\partial_yf=0.
\end{gather*}
This proves the statement~i) in the case $\eta=0$ and the statement~ii) if we assume $\eta \neq 0$.

With regard to af\/f\/ine vector f\/ields, expressing condition $\mathcal L_X\nabla=0$ in the coordinate basis $\{\partial_t,\partial_x,\partial_y\}$, we get the following system of partial dif\/ferential equations:
\begin{gather}
\partial_{tt}^2X_1=\partial_{tt}^2X_2=\partial_{xx}^2X_2=\partial_{tx}^2X_2=\partial_{tt}^2X_3=\partial_{xx}^2X_3=\partial_{tx}^2X_3=\partial_{ty}^2X_3=0,
\nonumber\\
\partial_{xx}^2X_1+\partial_xX_3\partial_xf=0,\nonumber\\
2\partial_{tx}^2X_1+\partial_tX_3\partial_xf=0,\nonumber\\
2\partial_{ty}^2X_2-\partial_tX_3\partial_xf=0,\nonumber\\
2\partial_{xy}^2X_3-\partial_tX_3\partial_xf=0,\nonumber\\
2\partial_{ty}^2X_1+\partial_tX_3\partial_yf+\partial_tX_2\partial_xf=0,\nonumber\\
2\partial_{xy}^2X_2-\partial_xX_3\partial_xf-\partial_tX_2\partial_xf=0,\nonumber\\
2\partial_{yy}^2X_3+\partial_xX_3\partial_xf-\partial_tX_3\partial_yf=0,\nonumber\\
2\partial_yX_3\partial_xf-2\partial_{yy}^2X_2-\partial_xX_2\partial_xf+\partial_tX_2\partial_yf+X_2\partial_{xx}^2f+X_3\partial_{xy}^2f=0,\nonumber\\
 \partial_yX_3\partial_xf+2\partial_{xy}^2X_1+\partial_xX_3\partial_yf+\partial_xX_2\partial_xf-\partial_tX_1\partial_xf
 +X_2\partial_{xx}^2f+X_3\partial_{xy}^2f=0,\nonumber\\
 2\partial_yX_3\partial_yf+2\partial_{yy}^2X_1+\partial_xX_1\partial_xf+2\partial_yX_2\partial_xf-\partial_tX_1\partial_yf
 +X_2\partial_{xy}^2f+X_3\partial_{yy}^2f=0.\label{sys2}
\end{gather}
As for the above system \eqref{sys1}, we then proceed to integrate~\eqref{sys2}. From the f\/irst equation we get $X_3=c_1t+a_1(y)x+f_2(y)$ and then the f\/ifth equation yields $2a_1 '(y)=c_1\partial_xf$, so that $c_1\partial_{xx}^2f=0$ and so, $c_1=0$. Then, $a_1(y)=c_2$ is a constant.

Integrating the third and fourth equations (taking into account the f\/irst one) we get $X_1=f_3(y)t+f_4(x,y)$, $X_2=c_3t+f_5(y)x+f_6(y)$. The sixth equation then gives $f_3 '(y)+c_3\partial_xf=0$, which, by the same argument above, yields $c_3=0$ and $f_3(y)=c_4$.

Similarly, the eighth equation becomes $2f_2 ''(y)+c_2\partial_xf=0$, which gives $c_2=0$ and $f_2(y)=c_5y+c_6$. The second equation now reads $\partial_{xx}^2f_4(x,y)=0$, and the seventh leads to $f_5 '(y)=0$. So, $f_4(x,y)=f_7(y)x+f_8(y)$ and $f_5(y)=c_7$.

By the ninth and tenth equations we then have $f_6 ''(y)+f_7 '(y)+\frac12(2c_7-c_4-c_5)\partial_xf=0$, so that $c_7=\frac{c_4+c_5}{2}$ and $f_7(y)=-f_6 '(y)+c_8$. Integrating the tenth equation with respect to the variable~$x$, we get
\begin{gather*}
(c_5-c_4)f-2f_6 ''(y)x+\left(\frac{c_4+c_5}{2}x+f_6(y)\right)\partial_xf+(c_5y+c_6)\partial_yf+f_9(y)=0.
\end{gather*}
We dif\/ferentiate the above equation with respect to $y$ and subtract the eleventh equation, obtaining $f_9 '(y)-2f_8 ''(y)=c_8\partial_xf$, which immediately leads to $c_8=0$ and $f_9(y)=2f_8 '(y)+c_9$. The statement follows after we {suitably rename} the remaining constants and functions.
\end{proof}

\begin{Remark}Since $f=f(x,y)$ is an arbitrary smooth function of two variables, we cannot integrate equations~\eqref{eqKill},~\eqref{eqHom} and \eqref{eqAff} of Theorem~\ref{nablasymm} in full generality. However, it is well known that the Lie algebras of Killing, homothetic and af\/f\/ine vector f\/ields are f\/inite-dimensional.

Therefore, on the one hand, for any prescribed function $f(x,y)$ these equations force the sets of Killing, homothetic and af\/f\/ine vector f\/ields of $(M,g_f)$ to depend on a f\/inite number of real parameters. On the other hand, they allow us to determine special functions $f$, for which we can f\/ind some explicit examples of homothetic non-Killing and af\/f\/ine non-homothetic vector f\/ields.

At the end of this {section} we shall illustrate these results calculating the symmetries of an arbitrary locally conformally f\/lat strictly Walker metric. In the next section we shall consider the functions~$f$ determining the locally homogeneous examples of three-dimensional strictly Walker manifolds. Further explicit examples can be determined by direct calculation.
\end{Remark}

\begin{Remark}[homothetic f\/ixed points]
The existence on a Lorentzian manifold $(M,g)$ of {\em homothetic fixed points}, that is, of a non-trivial homothetic vector f\/ield $X$ which vanishes at a~point $m\in M$, has some important consequences on the structure of the manifold itself. Dif\/ferent conclusions can be deduced depending on whether~$m$ is an isolated f\/ixed point or not. In the latter case, the zeroes of $X$ form a null geodesic, and the resulting metric is a kind of plane wave, whose conformal vector f\/ields can be determined. Interesting studies of the link between homothetic and conformal vector f\/ields (and their f\/ixed points) and the geometry the metrics can be found in~\cite{Ale,Beem,HC,HLP}. The above Theorem~\ref{nablasymm} and the special cases described in Theorem~\ref{cflatW} and in Section~\ref{section4}, allow us to discuss the existence of homothetic f\/ixed points for all three-dimensional Walker metrics, and gives a unif\/ied treatment for a large class of three-dimensional manifolds, where all dif\/ferent behaviours can occur, from metrics with no proper homothetic vector f\/ields, to cases where homothetic f\/ixed points occur and can be explicitly determined.
\end{Remark}

We now turn our attention to curvature collineations and prove the following.

\begin{Theorem}\label{RicciCol}
Let $X=X_1\partial_t+X_2\partial_x+X_3\partial_y$ be an arbitrary smooth vector field on the strictly Walker manifold $(M,g_f)$, where~$g_f$ is described as in \eqref{metric}. Then:
\begin{itemize}\itemsep=0pt
\item[$i)$] $X$ is a Ricci collineation if and only if one of the following cases occurs:
\begin{itemize}\itemsep=0pt
\item[$(a)$] $f$ is arbitrary and
\begin{gather*}X_2=-\frac{2f_1 '(y)\partial_{xx}^2f+f_1(y)\partial_{xxy}^3f}{\partial_{xxx}^3f}, \qquad X_3=f_1(y),\end{gather*}
where $f_1$ is an arbitrary smooth function on $M$, and the Ricci collineation is defined in the open subset where $\partial_{xxx}^3f \neq 0$.
\item[$(b)$] $f=f_1(y)x^2+f_2(y)x+f_3(y)$ and \begin{gather*}X_3=\frac{c_1}{\sqrt{|f_1(y)|}}.\end{gather*}
\end{itemize}

\item[$ii)$] $X$ is a curvature collineation if and only if~$X$ is a special Ricci collineation of one of the following types:
\begin{itemize}\itemsep=0pt
\item[$(a)^\prime$] type $(a)$ with
\begin{itemize}\itemsep=0pt
\item[$\bullet$] either $X_1=X_1 (y)$, $X_2=X_3=0$, or
\item[$\bullet$] $f(x,y)=f_2(x)f_3(y)+f_4(y)x+f_5(y)$, $f_1(y)=\frac{c_1}{\sqrt{|f_3(y)|}}$ and
\begin{gather*} X_1=\frac{c_1f_3 '(y)}2f_3(y)\sqrt{|f_3(y)|}t+f_6(y).\end{gather*}
\end{itemize}

\item[$(b)^\prime$] type $(b)$ with $X_1=2f_4(y)t+\frac{c_1f_1 '(y)}2f_1(y)\sqrt{|f_1(y)|}t-\frac12f_4 '(y) x^2-f_5 '(y) x+f_6(y)$ and $X_2=f_4(y)x+f_5(y)$.
\end{itemize}
\end{itemize}
\end{Theorem}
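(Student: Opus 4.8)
The plan is to exploit the extreme simplicity of the tensors recorded in \eqref{curvature} and \eqref{Ricci}: both the curvature and the Ricci tensor have essentially a single nonvanishing component proportional to $f_{xx}$, so the symmetry equations will be short and can be integrated almost directly.

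\emph{Part i) (Ricci collineations).} First I would compute $\mathcal L_X\varrho$ straight from \eqref{Ricci}, whose only nonzero entry is $\varrho_{yy}=-\frac12 f_{xx}$. The condition $\mathcal L_X\varrho=0$ then reduces to the three scalar equations
\begin{gather*}
f_{xx}\,\partial_t X_3=0,\qquad f_{xx}\,\partial_x X_3=0,\qquad
X_2 f_{xxx}+X_3 f_{xxy}+2 f_{xx}\,\partial_y X_3=0.
\end{gather*}
Since $f_{xx}\neq 0$, the first two force $X_3=X_3(y)$ while $X_1$ stays entirely free, reflecting the infinite dimensionality anticipated in Section~\ref{section2}. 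I would then split on the coefficient $f_{xxx}$ in the last equation. If $f_{xxx}\neq 0$ I solve it algebraically for $X_2$, obtaining case~$(a)$ with $X_3=f_1(y)$. If $f_{xxx}\equiv 0$ then $f$ is quadratic in $x$, namely $f=f_1(y)x^2+f_2(y)x+f_3(y)$, the $X_2$–term drops out, and the equation becomes the linear ODE $f_1'X_3+2f_1X_3'=0$, integrated to $X_3=c_1/\sqrt{|f_1|}$; this is case~$(b)$.

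\emph{Part ii) (Curvature collineations).} Because a curvature collineation is in particular a Ricci collineation, I would only have to strengthen the two families of Part~i). Computing $\mathcal L_X R=0$ from \eqref{curvature}, whose independent components are $R^t_{xxy}=\frac12 f_{xx}=-R^x_{yxy}$, one component reproduces the Ricci equation verbatim, and the remaining independent components give
\begin{gather*}
\partial_t X_3=0,\qquad \partial_x X_3=0,\qquad \partial_t X_2=0,\\
\partial_x X_1+\partial_y X_2=0,\qquad 2\partial_x X_2-\partial_y X_3-\partial_t X_1=0.
\end{gather*}
The last equation gives $\partial_t X_1=2\partial_x X_2-\partial_y X_3$, whose right-hand side is $t$-independent, so $X_1$ is affine in $t$; substituting into $\partial_x X_1+\partial_y X_2=0$ and separating powers of $t$ yields $\partial_{xx}^2 X_2=0$ together with an integrable equation for the $x$-dependence of $X_1$. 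Thus every curvature collineation obeys $\partial_{xx}^2 X_2=0$ and has $X_1$ determined up to a function of $y$.

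In case~$(b)$, where the Ricci condition leaves $X_2$ free, $\partial_{xx}^2 X_2=0$ immediately gives $X_2=f_4(y)x+f_5(y)$, and integrating the two first-order equations produces the explicit $X_1$ of statement~$(b)'$. In case~$(a)$, where $X_2$ is already rigidly fixed by $f$ and $f_1$, the constraint $\partial_{xx}^2 X_2=0$ is an overdetermined condition on $f$: when $f_1\equiv 0$ it holds trivially with $X_2\equiv 0$, leaving $X=X_1(y)\partial_t$ for arbitrary $f$ (the first alternative of $(a)'$). The hard part will be $f_1\not\equiv 0$. Here $X_2=-f_1\,(f_{xxy}/f_{xxx})-2f_1'\,(f_{xx}/f_{xxx})$, so $\partial_{xx}^2 X_2=0$ reads $f_1\,\partial_{xx}^2(f_{xxy}/f_{xxx})+2f_1'\,\partial_{xx}^2(f_{xx}/f_{xxx})=0$; since $f_1,f_1'$ depend only on $y$, this forces the quotient $\partial_{xx}^2(f_{xxy}/f_{xxx})\big/\partial_{xx}^2(f_{xx}/f_{xxx})$ to be independent of $x$, a separation-of-variables condition which I expect to be the main obstacle. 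Untangling it should yield the factored form $f=f_2(x)f_3(y)+f_4(y)x+f_5(y)$, after which $f_1 f_3'+2f_1' f_3=0$, i.e.\ $f_1=c_1/\sqrt{|f_3|}$, makes $X_2$ vanish and the remaining first-order equations fix $X_1$, giving the second alternative of $(a)'$; the converse inclusions are a direct verification.
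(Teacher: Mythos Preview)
Your approach is essentially the same as the paper's. For part~i), your derivation of the three scalar equations from $\mathcal L_X\varrho=0$ and the case split on whether $f_{xxx}$ vanishes is exactly what the paper does (equations~\eqref{sys4} and~\eqref{mainric}). For part~ii), the paper writes out $\mathcal L_X R=0$ only in case~(b), obtaining precisely your equations $\partial_t X_2=0$, $\partial_x X_1+\partial_y X_2=0$, and (after inserting the explicit $X_3$) your relation $2\partial_x X_2-\partial_y X_3-\partial_t X_1=0$; it then derives $\partial_{xx}^2 X_2=0$ by differentiating the last equation in~$x$ and using $\partial_{tx}^2 X_1=-\partial_{ty}^2 X_2=0$, whereas you obtain the same conclusion by first writing $X_1$ as affine in~$t$ and isolating the $t$-coefficient in $\partial_x X_1+\partial_y X_2=0$. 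These are equivalent manipulations. The paper explicitly declares case~(a)$'$ to be ``of the same kind'' and gives no details, so your outline there in fact goes beyond the paper; your hedged claim that the condition $\partial_{xx}^2 X_2=0$ with $f_1\not\equiv 0$ should force the separated form $f=f_2(x)f_3(y)+f_4(y)x+f_5(y)$ is exactly the step that neither you nor the paper works out, and it is the only genuinely nontrivial part left.
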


\begin{proof}
Because of equation~\eqref{Ricci}, a smooth vector f\/ield $X=X_1\partial_t+X_2\partial_x+X_3\partial_y$ on a strictly Walker manifold $(M,g_f)$ is a Ricci collineation if and only if
\begin{gather}\label{sys4}
 \partial_{xx}^2f \partial_tX_3=0,\qquad \partial_{xx}^2f\partial_xX_3=0,\qquad
 2\partial_{xx}^2f\partial_yX_3+ \partial_{xxx}^3f X_2+ \partial_{xxy}^3f X_3=0.
\end{gather}
As we already mentioned, we are always assuming that $\partial_{xx}^2f\neq 0$. Consequently, from the f\/irst two equations in~\eqref{sys4} we have $X_3=X_3(y)$, and the third equation becomes
\begin{gather}\label{mainric}
2\partial_{xx}^2f X_3 '(y)+\partial_{xxx}^3f X_2+\partial_{xxy}^3f X_3(y)=0.
\end{gather}
In the open subset where $\partial_{xxx}^3f \neq 0$, from the above equation~\eqref{mainric} we get at once the case~(a). Case~(b) is obtained as a~special solution of \eqref{mainric}, assuming that $\partial_{xxx}^3f =0$.

We then consider curvature collineations, starting from an arbitrary Ricci collineation as described in cases~(a) and~(b) and requiring the additional condition $\mathcal{L}_X R=0$. Calculations are of the same kind for all these cases. For this reason, we report the details only for case~(b).

So, consider a Ricci collineation $X=X_1\partial_t+X_2\partial_x+\frac{c_1}{\sqrt{|f_1(y)}|}\partial_y$, where $f=f_1(y)x^2+f_2(y)x+f_3(y)$. In particular, calculating the condition $\mathcal{L}_X R=0$ on the pairs of coordinate vector f\/ields $\partial_t$, $\partial_x$, $\partial_y$, we f\/ind that $X$ is a curvature collineation if and only if the following equations hold:
\begin{gather*}
\partial_yX_2+\partial_xX_1=\partial_tX_2=0,\qquad \frac{c_1f_1 '(y)}{2\sqrt{|f_1(y)|}}+f_1(y)\left(2\partial_xX_2-\partial_tX_1\right)=0.
\end{gather*}
It easily follows from the f\/irst of the above equations that $\partial_xX_1$ and $X_2$ are functions of the variables $(x,y)$. Since $f_1(y)\neq 0$, dif\/ferentiating with respect to $x$ the second of the above equations we get $\partial_{xx}^2X_2=0$ and so, $X_2=f_4(y)x+f_5(y)$. Now, again the second equation gives $X_1=\frac{c_1f_1 '(y)}{2f_1(y)\sqrt{|f_1(y)}|}t+2f_4(y)t+f_6(x,y)$. Then, since $\partial_xX_1=-\partial_yX_2$, we conclude that $f_6(x,y)=-\frac12f_4 '(y) x^2-f_5 '(y) x+f_7(y)$ and this ends the proof.
\end{proof}

Observe that taking $X_2=X_3=0$, all equations in~\eqref{sys4} are satisf\/ied. Therefore, $X=X_1 \partial_{t}$ is a Ricci collineation for any arbitrary smooth function $X_1=X_1(t,x,y)$, and (by case~(a)$^\prime$) a~curvature collineation for any smooth function $X_1=X_1(y)$. This implies at once the following.

\begin{Corollary}
For any strictly Walker three-manifold $(M,g_f)$, the Lie algebras of smooth Ricci collineations and smooth curvature collineations are infinite-dimensional. In particular, each of these spaces admits proper Ricci and curvature collineations.
\end{Corollary}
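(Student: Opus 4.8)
The statement is essentially a direct consequence of the observation immediately preceding it, so the plan is to make that observation quantitative and then to address the two points that are not entirely automatic: the Lie-algebra structure and the existence of \emph{proper} collineations. First I would note that system~\eqref{sys4} imposes no condition at all on the component $X_1$, and is satisfied identically once $X_2=X_3=0$; hence every field $X=X_1(t,x,y)\,\partial_t$ is a Ricci collineation, and by case~$(a)'$ of Theorem~\ref{RicciCol} every field $X=X_1(y)\,\partial_t$ is a curvature collineation. To pass from ``nonempty'' to ``infinite-dimensional'' it suffices to exhibit an infinite, linearly independent family inside each space: the fields $t^k\,\partial_t$ with $k\ge 0$ are Ricci collineations, and the fields $y^k\,\partial_t$ with $k\ge 0$ are curvature collineations, the functions $t^k$ (respectively $y^k$) being linearly independent. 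Thus neither space can be finite-dimensional.

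Next I would confirm that the smooth collineations form genuine Lie algebras and not merely vector spaces, this being precisely the subtlety flagged in point~(b) of Section~\ref{section2} for the non-smooth case. Here the verification is painless: from the identity $\mathcal L_{[X,Y]}T=\mathcal L_X\mathcal L_Y T-\mathcal L_Y\mathcal L_X T$, applied with $T=\varrho$ and then with $T=R$, one sees that if $X,Y$ are smooth and satisfy $\mathcal L_X T=\mathcal L_Y T=0$, then $[X,Y]$ is again smooth and satisfies $\mathcal L_{[X,Y]}T=0$. Hence the smooth Ricci collineations and the smooth curvature collineations are each closed under the bracket, and by the previous paragraph they are infinite-dimensional.

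It remains to deduce the existence of \emph{proper} collineations, i.e.\ ones that are not homothetic. For this I would invoke Theorem~\ref{nablasymm} together with the standard fact, recalled just after it, that the Killing, homothetic and affine vector fields of $(M,g_f)$ span \emph{finite}-dimensional Lie algebras. A finite-dimensional space of homotheties cannot fill out an infinite-dimensional space of collineations, so in each of the two spaces there necessarily exist elements that are not homothetic. The argument as a whole is short; the only steps requiring a little attention are the closure under bracket, settled by the Lie-derivative commutator identity above, and the appeal to the finiteness of the homothety algebra, which is what guarantees properness rather than mere existence.
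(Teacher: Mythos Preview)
Your argument is correct and follows the same approach as the paper: the paper simply records the observation that $X=X_1\,\partial_t$ is a Ricci collineation for arbitrary smooth $X_1(t,x,y)$ and a curvature collineation for arbitrary $X_1(y)$, and then states that the corollary follows ``at once''. You have merely made explicit what the paper leaves implicit---the infinite linearly independent families, the bracket closure via $\mathcal L_{[X,Y]}=[\mathcal L_X,\mathcal L_Y]$, and the finite-dimensionality of homotheties to guarantee properness---so your write-up is in fact more complete than the paper's.
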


We end this section calculating the symmetries of a locally conformally f\/lat strictly Walker three-manifold. By direct calculations of the Cotton tensor of a strictly Walker three-mani\-fold~$(M,g_f)$ (see also~\cite{ChGRVA}), it is easily seen that this manifold is locally conformally f\/lat if and only if $\partial_{xxx}^3f$ vanishes identically, that is, when the def\/ining function is of the form $f(x,y)=p(y)x^2+q(y)x+r(y)$ (with $p(y) \neq 0$ in order to avoid the f\/lat case). We now prove the following.

\begin{Theorem}\label{cflatW}
Let $X=X_1\partial_t+X_2\partial_x+X_3\partial_y$ be an arbitrary smooth vector field on a conformally flat strictly Walker manifold $(M,g_f)$, where $g_f$ is described as in \eqref{metric} with $f(x,y)=p(y)x^2+q(y)x+r(y)$ $(p(y)\neq 0)$. Then, $X$ is:
\begin{itemize}\itemsep=0pt
\item[$i)$] a Killing vector field if and only if $(c_1y+c_2)p(y)'+2c_1p(y)=0$ and~$X$ is described as in~\eqref{eqKillvector}, with $f_1$, $f_2$ explicitly determined as solutions of
\begin{gather*}
2f_1(y)''-2c_1q(y)-(c_1y+c_2)q(y)'-2f_1(y)p(y)=0,\\
{2f_2(y)'+2c_1r(y)+}(c_1y+c_2)r(y)'+f_1(y)q(y)=0.
\end{gather*}
\item[$ii)$] a homothetic, non-Killing vector field if and only if $(c_1y+c_2)p'(y)+2c_1p(y)=0$ and~$X$ is described as in~\eqref{eqHomvector}, with $f_1$, $f_2$ explicitly determined as solutions of
\begin{gather*}
2f_1 ''(y)+(\frac{\eta}{2}-2c_1)q(y)-(c_1y+c_2)q '(y)-2f_1(y)p(y)=0,\\
{2f_2 '(y)+(2c_1-\eta)r(y)}+(c_1y+c_2)r'(y)+f_1(y)q(y)=0,\ \eta\neq0.
\end{gather*}
\item[$iii)$] a proper affine Killing vector field if and only if $(c_2y+c_3)p(y)'+2c_2p(y)=0$ and $X$ is described as in~\eqref{eqAffvector}, with $f_1$, $f_2$ explicitly determined as solutions of
\begin{gather*}
2f_1 ''(y){+\frac{c_1-3c_2}{2}q(y)}-(c_2y+c_3)q' (y)-2f_1(y)p(y)=0,\\
2f_2 '(y) {+(c_2-c_1)r(y)} +(c_2y+c_3)r '(y)+f_1(y)q(y)+c_4=0.
\end{gather*}
\item[$iv)$] a Ricci collineation if and only if $X_3=\frac{c_1}{\sqrt{|p(y)|}}$, where $c_1$ is a real constant.
\item[$v)$] a curvature collineation if and only if
\begin{gather*}
X_1=2f_1(y)t+\frac{c_1p'(y)}{2p(y)\sqrt{|p(y)|}}t-\frac12f_1 '(y)x^2-f_2 '(y)x+f_3(y),\\
X_2=f_1(y)x+f_2(y),\qquad
X_3=\frac{c_1}{\sqrt{|p(y)|}},
\end{gather*}
where $f_1(y)$ and $f_2(y)$ are arbitrary smooth functions on~$M$.
\end{itemize}
\end{Theorem}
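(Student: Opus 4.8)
The plan is to regard Theorem~\ref{cflatW} as the specialization, to the conformally flat case $f(x,y)=p(y)x^2+q(y)x+r(y)$ with $p(y)\neq 0$, of the general descriptions already obtained in Theorems~\ref{nablasymm} and~\ref{RicciCol}. Parts~i)--iii) concern Killing, homothetic and affine vector fields, whose general form and residual integrability conditions are furnished by Theorem~\ref{nablasymm}; parts~iv)--v) concern Ricci and curvature collineations, governed by Theorem~\ref{RicciCol}. Thus no fresh integration of the defining PDE systems is required: it suffices to insert the explicit quadratic $f$ into the leftover scalar equations \eqref{eqKill}, \eqref{eqHom}, \eqref{eqAff} and into the classification of Theorem~\ref{RicciCol}, and then exploit the polynomial structure in the variable $x$.

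First I would treat parts~i)--iii). In each case the vector field has the prescribed shape \eqref{eqKillvector}, \eqref{eqHomvector} or \eqref{eqAffvector}, and the only remaining obstruction is the single scalar equation \eqref{eqKill}, \eqref{eqHom} or \eqref{eqAff}. Substituting $f=p x^2+q x+r$, together with $\partial_x f=2p x+q$ and $\partial_y f=p' x^2+q' x+r'$, turns each of these into a polynomial identity in $x$ whose coefficients depend only on $y$ (through $p,q,r$, the unknown functions $f_1,f_2$ and the constants). Since the identity must hold for all $x$, the coefficients of $x^2$, $x^1$ and $x^0$ must vanish separately. The coefficient of $x^2$ yields in every case the compatibility condition on $p$ of the form $(c_1y+c_2)p'+2c_1p=0$ (with the appropriate constant in the affine case), in which the homothetic parameter $\eta$ conveniently cancels; the coefficients of $x^1$ and $x^0$ then become exactly the two second- and first-order ODEs for $f_1$ and $f_2$ recorded in the statement. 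A minor bookkeeping point is to relabel the constants so as to match the normalizations fixed by \eqref{eqAffvector}.

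Next I would handle parts~iv)--v). The starting observation is that conformal flatness is equivalent to $\partial^3_{xxx}f\equiv 0$, which is precisely the quadratic form of $f$; consequently case~(a) of Theorem~\ref{RicciCol}, defined only on the open set where $\partial^3_{xxx}f\neq 0$, is here vacuous, so every Ricci collineation falls under case~(b) with $f_1(y)=p(y)$, giving $X_3=c_1/\sqrt{|p(y)|}$ as claimed in~iv). For the curvature collineations of~v) I would simply read off case~(b)$'$ of Theorem~\ref{RicciCol} under the identification $f_1\mapsto p$, relabelling the auxiliary functions so that the expressions for $X_1$, $X_2$, $X_3$ assume the stated form.

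The routine substitutions are unproblematic; the main point requiring care is the clean separation by powers of $x$, which relies on $p,q,r,f_1,f_2$ being functions of $y$ alone, together with the verification that the conformally flat condition genuinely forces the degenerate branch (case~(b)/(b)$'$) of Theorem~\ref{RicciCol} rather than the generic one. Once these structural facts are secured, parts~iv) and~v) are immediate, and parts~i)--iii) reduce to reading off the vanishing of the three coefficients in $x$.
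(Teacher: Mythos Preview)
Your proposal is correct and follows essentially the same approach as the paper's own proof: substitute the quadratic $f$ into the residual scalar equations of Theorem~\ref{nablasymm}, separate by powers of $x$, and for parts~iv)--v) invoke cases~(b) and~(b)$'$ of Theorem~\ref{RicciCol} after noting that $\partial^3_{xxx}f\equiv 0$. The paper carries out exactly this computation, including the observation that $\eta$ drops out of the $x^2$-coefficient so that the compatibility condition on $p$ is the same in the Killing and homothetic cases.
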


\begin{proof}
Let $(M,g_f)$ be a conformally f\/lat strictly Walker manifold of dimension three, where~$g_f$ is described by the relation \eqref{metric}. As explained above, the function $f$ satisf\/ies $f(x,y)=p(y)x^2+q(y)x+r(y)$, ($p(y)\neq 0$), where $p(y)$, $q(y)$ and $r(y)$ are arbitrary smooth functions on~$M$. We then choose a~smooth vector f\/ield $X=X_1\partial_t+X_2\partial_x+X_3\partial_y$, where $X_1$, $X_2$ and $X_3$ are arbitrary smooth functions on~$M$. By Theorem~\ref{nablasymm}, $X$ is a homothetic vector f\/ield if and only if satisf\/ies equations~\eqref{eqHomvector} and~\eqref{eqHom}. Equation~\eqref{eqHom} for the above function $f(x,y)$ gives
\begin{gather*}
\left(2c_1p(y)+(c_1y+c_2)p(y)'\right)x^2 \\
\qquad {} -\left(2f_1(y)''+\left(\frac{\eta}{2}-2c_1\right)q(y)-(c_1y+c_2)q(y)'-2f_1(y)p(y)\right)x\\
\qquad{} +(2c_1-\eta)r(y)+2f_2(y)'+(c_1y+c_2)r(y)'+f_1(y)q(y)=0.
\end{gather*}
This equation immediately proves the second statement, since the coef\/f\/icients of $x$ and {its powers must vanish, in order to satisfy it identically. The f\/irst statement now follows by} {setting} $\eta=0$ in the equations of homothetic vector f\/ields.

With regard to af\/f\/ine Killing vector f\/ields, $X$ must satisfy equations \eqref{eqAffvector} and \eqref{eqAff}. So by straightforward calculations, the functions $p(y)$, $q(y)$ and $r(y)$ must satisfy
\begin{gather*}
\left(2c_2p(y)+(c_2y+c_3)p(y)'\right)x^2\\
\qquad{}-\left(2f_1 ''(y)-(c_2y+c_3)q'(y)-2f_1(y)p(y)+\frac{c_1-3c_2}{2}q(y)\right)x\\
\qquad{}+\left(2f_2 '(y)+(c_2y+c_3)r'(y)+f_1(y)q(y)+(c_2-c_1)r(y)+c_4\right)=0,
\end{gather*}
which {leads to} the third statement. Assertions (iv) and (v) are direct consequences of the cases~${\rm (b)}$ and~${\rm (b)'}$ of Theorem~\ref{RicciCol}, respectively.
\end{proof}

\section[Symmetries of homogeneous Lorentzian three-manifolds with recurrent curvature]{Symmetries of homogeneous Lorentzian three-manifolds \\ with recurrent curvature}\label{section4}

We reported the classif\/ication of homogeneous Lorentzian three-manifolds with recurrent curvature in Theorem~\ref{homex}. We shall now completely describe the symmetries of these manifolds. We start with the following.

\begin{Theorem}
Let $(M,g_f)$ be a homogeneous three-dimensional Lorentzian strictly Walker mani\-fold of type $\mathcal N_b$, that is, determined by
$f(x,y)=\frac{-2e^{bx}}{b^2}$, $b\neq 0$.

An arbitrary smooth vector field $X=X_1\partial_t+X_2\partial_x+X_3\partial_y$ on~$M$:
\begin{itemize}\itemsep=0pt
\item is homothetic $($equivalently, Killing$)$ if and only if
\begin{gather*}X_1=c_1t+c_2, \qquad X_2=\frac{2c_1}{b}, \qquad X_3=-c_1y+c_3.\end{gather*}
\item is affine if and only if
\begin{gather*}X_1=c_1t+c_2+c_4y,\qquad X_2=\frac{2c_1}{b},\qquad X_3=-c_1y+c_3.\end{gather*}
\item is a Ricci collineation if and only if $X_1$ is arbitrary and
\begin{gather*} X_2=-\frac2b f_1 '(y),\qquad X_3=f_1(y).\end{gather*}
\item is a curvature collineation if and only if{\samepage
\begin{gather*}X_1=f_2(y)-f_1 '(y) t+\frac2bf_1 ''(y)x,\qquad X_2=-\frac 2bf_1 '(y),\qquad X_3=f_1(y).\end{gather*}
In the above equations, $c_j $ are real constants and $f_1(y)$ an arbitrary smooth function.}
\end{itemize}
\end{Theorem}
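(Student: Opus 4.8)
The plan is to specialize the general descriptions of Theorems~\ref{nablasymm} and~\ref{RicciCol} to the defining function $f(x,y)=-2b^{-2}e^{bx}$. I would first record the derivatives needed: $f_x=-\tfrac2b e^{bx}$, $f_{xx}=-2e^{bx}$, $f_{xxx}=-2be^{bx}$, while every derivative of $f$ involving $y$ vanishes, since $f$ is independent of~$y$. In particular $f_{xx}\neq0$, so the recurrent-curvature hypothesis holds, and $f_{xxx}\neq 0$ (as $b\neq0$), so the Ricci collineations will fall under case~(a) of Theorem~\ref{RicciCol}. The decisive structural feature throughout is that, after substitution, each defining equation becomes a linear combination of the functions $1,\ x,\ e^{bx},\ xe^{bx}$ with coefficients depending only on~$y$; these four functions are linearly independent, so every coefficient must vanish separately.

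For the Killing and homothetic fields I would insert $f$ into \eqref{eqHomvector}--\eqref{eqHom}. Splitting \eqref{eqHom} along $1,x,e^{bx},xe^{bx}$, the coefficient of $xe^{bx}$ equals $-\tfrac{\eta}{b}$, forcing $\eta=0$; this is precisely why on $\mathcal N_b$ there are no proper homotheties and ``homothetic'' coincides with ``Killing''. The coefficient of $x$ gives $f_1''(y)=0$, the constant term gives $f_2'(y)=0$, and the coefficient of $e^{bx}$ ties $f_1$ to the constant appearing in $X_3$, so $f_1=-\tfrac{2c_1}{b}$. After renaming constants this produces the stated Killing fields. The affine case is identical in spirit: substituting into \eqref{eqAffvector}--\eqref{eqAff}, the coefficient of $xe^{bx}$ now yields $c_1+c_3=0$, while the constant term reads $2f_2'(y)+c_4=0$. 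The latter is exactly what allows the extra term $c_4y$ in $X_1$ that distinguishes proper affine fields from Killing ones.

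The Ricci collineations follow immediately from case~(a) of Theorem~\ref{RicciCol}: since $f_{xxy}=0$, the formula there collapses to $X_2=-\tfrac2b f_1'(y)$, $X_3=f_1(y)$ with $X_1$ arbitrary, which is the third assertion.

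For the curvature collineations I would \emph{not} invoke case~$(a)'$ of Theorem~\ref{RicciCol} directly: writing $f=f_2(x)f_3(y)+\cdots$ here forces the $y$-factor $f_3$ to be constant, so that formula degenerates and would only return $X_3$ constant, whereas the genuine answer keeps $X_3=f_1(y)$ arbitrary. Instead I would impose $\mathcal L_XR=0$ directly on the two independent curvature components in \eqref{curvature}, namely $R(\partial_x,\partial_y)\partial_x=\tfrac12 f_{xx}\partial_t$ and $R(\partial_x,\partial_y)\partial_y=-\tfrac12 f_{xx}\partial_x$, expanding $(\mathcal L_XR)(\partial_x,\partial_y)\partial_x$ and $(\mathcal L_XR)(\partial_x,\partial_y)\partial_y$ through the brackets $[X,\partial_t]$, $[X,\partial_x]$, $[X,\partial_y]$. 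Reading off the coefficients of $\partial_t,\partial_x,\partial_y$ yields the system $\partial_tX_3=\partial_xX_3=0$, $\partial_tX_2+\partial_xX_3=0$, $bX_2+2\partial_yX_3=0$, $\partial_xX_1+\partial_yX_2=0$ and $\partial_tX_1=bX_2+2\partial_xX_2+\partial_yX_3$. The equation $bX_2+2\partial_yX_3=0$ reproduces the Ricci relation $X_2=-\tfrac2b f_1'(y)$ (so $X_2$ depends on $y$ alone), after which the last equation gives $\partial_tX_1=-f_1'(y)$ and the fourth gives $\partial_xX_1=-\partial_yX_2=\tfrac2b f_1''(y)$; integrating delivers $X_1=f_2(y)-f_1'(y)t+\tfrac2b f_1''(y)x$, completing the classification. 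I expect this last step to be the main obstacle: recognizing that the general factored case does not apply, and then carrying the Lie-derivative-of-a-$(1,3)$-tensor bookkeeping without sign errors, is the only genuinely delicate part of the argument.
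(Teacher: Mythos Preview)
Your proposal is correct and follows essentially the same route as the paper: both specialize Theorems~\ref{nablasymm} and~\ref{RicciCol} to $f=-2b^{-2}e^{bx}$, separate the resulting relations along the independent functions $1,x,e^{bx},xe^{bx}$, and for curvature collineations impose $\mathcal L_XR=0$ directly on top of the Ricci conditions rather than invoking case~(a)$'$. Your explicit remark that case~(a)$'$ degenerates here (because the $y$-factor is constant and would force $X_3$ constant) is a useful observation not spelled out in the paper, but the actual computation you carry out is the same one the paper performs in its final two displayed equations.
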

\begin{proof}
Let $(M,g_f)$ be a homogeneous three-dimensional Lorentzian strictly Walker manifold of type $\mathcal N_b$, where $f(x,y)=\frac{-2e^{bx}}{b^2},\ b\neq 0$. By Theorem~\ref{nablasymm}, an arbitrary smooth vector f\/ield $X=X_1\partial_t+X_2\partial_x+X_3\partial_y$ is homothetic if and only if
$X_1$, $X_2$ and $X_3$ satisfy equations~\eqref{eqHomvector} and \eqref{eqHom}. Setting $f(x,y)=\frac{-2e^{bx}}{b^2}$ in equation~\eqref{eqHom}, we easily get
\begin{gather*}
\frac{e^{bx}}{b^2}(2-bx)\eta-\frac{2e^{bx}}{b^2}(2c_1+bf_1(y))-2f_1 ''(y)x=-2f_2 '(y),
\end{gather*}
which easily yields $\eta=0$, $ f_1(y)=-\frac{2{\rm c}_1}{b}$ and $f_2(y)=c_3$, where $c_3$ is a real constant. As $\eta=0$, $X$ is a homothetic vector f\/ield if and only if it is Killing, and the statement follows from equation~\eqref{eqHomvector}.

With regard to af\/f\/ine vector f\/ields, setting $f(x,y)=\frac{-2e^{bx}}{b^2}$ in equation~\eqref{eqAff} we f\/ind
\begin{gather*}
\frac{2e^{bx}}{b^2}\left(c_2-c_1+\frac{b}{2}(c_1+c_2)x+bf_1(y)\right)+2f_1 ''(y) x=2f_2 '(y)+c_4,
\end{gather*}
which since $b\neq 0$, {yields} the following relations
\begin{gather*}
c_2=-c_1,\qquad f_1(y)=\frac{2c_1}{b},\qquad f_2(y)=-\frac{c_4}{2}y+c_5,
\end{gather*}
where $c_5$ is a real constant. The statement then follows {if one chooses} suitable coef\/f\/i\-cients~$c_2$,~$c_3$ and $c_4$ in~\eqref{eqAffvector}.

Next, the result on Ricci collineations follows easily from the fact that they are characterized by equations
\begin{gather*}
\partial_tX_3=\partial_xX_3=0,\qquad \partial_yX_3+bX_2=0.
\end{gather*}
In particular, a Ricci collineation is also a curvature collineation when it satisf\/ies
\begin{gather*}
f_1 '(y)+\partial_tX_1=0,\qquad 2f_1 ''(y)-b\partial_xX_1=0,
\end{gather*}
which proves the last part of the statement.
\end{proof}

With regard to homogeneous three-dimensional Lorentzian strictly Walker manifolds of ty\-pe~$\mathcal P_c$ and~$\mathcal{CW}_\varepsilon$, comparing their def\/ining functions~$f(x,y)$ with the one of a locally conformally f\/lat strictly Walker three-manifold, it is easy to conclude that these homogeneous spaces are indeed locally conformally f\/lat. Therefore, their symmetries can be deduced as special cases of the results obtained in Theorem~\ref{cflatW}. In this way, we obtain the following.

\begin{Theorem}\label{Pc}
Let $(M,g_f)$ be a homogeneous three-dimensional Lorentzian strictly Walker mani\-fold of type~$\mathcal P_c$, that is, determined by $f(x,y)=-x^2\alpha(y)$, where \begin{gather*}\alpha '(y)=c\alpha(y)^{\frac32},\qquad \alpha(y)>0.\end{gather*}
Let $h(y)$ denote a smooth function explicitly determined from $\alpha(y)$ by equation
\begin{gather*}h''(y)+\alpha(y)h(y)=0.\end{gather*}

An arbitrary smooth vector field $X=X_1\partial_t+X_2\partial_x+X_3\partial_y$ on $M$:
\begin{itemize}\itemsep=0pt
\item is Killing if and only if
\begin{gather*}X_1=-h'(y) x+c_1,\qquad X_2=h(y),\qquad X_3=0.\end{gather*}
\item is homothetic if and only if
\begin{gather*}X_1=-h'(y)x+c_1+\eta t,\qquad X_2=h(y)+\frac\eta2 x,\qquad X_3=0.\end{gather*}
\item is affine if and only if
\begin{gather*}X_1=-h'(y)x+c_1+c_2t+c_3y,\qquad X_2=h(y)+\frac{c_2}{2}x,\qquad X_3=0.\end{gather*}
\item is a Ricci collineation if and only if $X_1$, $X_2$ are arbitrary and $X_3=\frac{c_1}{\sqrt{|\alpha(y)|}}$.
\item $X$ is a curvature collineation if and only if
\begin{gather*}
X_1=-\frac12f_1 '(y) x^2-f_2 '(y) x+\left(2f_1(y)+\frac{c_1c}{2}\right)t+f_3(y),\\ X_2=f_1(y)x+f_2(y),\qquad X_3=\frac{c_1}{\sqrt{|\alpha(y)|}}.
\end{gather*}
In the above equations, $f_i(y)$ are arbitrary smooth functions and $c_j$ are real constants.
\end{itemize}
\end{Theorem}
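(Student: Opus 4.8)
The plan is to exploit the fact that the defining function $f(x,y)=-x^2\alpha(y)$ is of the form $p(y)x^2+q(y)x+r(y)$ with $p(y)=-\alpha(y)$ and $q(y)=r(y)=0$, so that $\mathcal P_c$ is locally conformally flat and every assertion can be obtained by specializing Theorem~\ref{cflatW}. The single new ingredient is the structural equation $\alpha'(y)=c\,\alpha(y)^{3/2}$, which allows me to eliminate every derivative of $\alpha$ (and every composite such as $p'/(p\sqrt{|p|})$) in favour of $\alpha$ itself and the constant $c$. Thus I would first substitute $p=-\alpha$, $q=r=0$ into the five characterizations of Theorem~\ref{cflatW}, and then systematically replace $\alpha'$ by $c\,\alpha^{3/2}$.

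For the Killing, homothetic and affine cases the governing constraint from Theorem~\ref{cflatW} is $(c_1y+c_2)p'(y)+2c_1p(y)=0$ (with the corresponding shift of constants in the affine case). Substituting $p=-\alpha$ and $\alpha'=c\,\alpha^{3/2}$ and dividing by $\alpha>0$, this becomes the algebraic relation $2c_1+c\,(c_1y+c_2)\sqrt{\alpha(y)}=0$. Extracting from this relation the precise constraint on the constants $c_1,c_2$ appearing in $X_3=c_1y+c_2$ (the claimed conclusion being $X_3=0$) is the step I expect to be the main obstacle, since $\sqrt{\alpha}$ is itself tied to $y$ through the same ODE and the analysis is delicate. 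In parallel, the first auxiliary ODE of Theorem~\ref{cflatW}, namely $2f_1''(y)-2f_1(y)p(y)=0$, collapses (because $q=0$) to $f_1''(y)+\alpha(y)f_1(y)=0$, which is exactly the equation defining $h$; hence $f_1=h$. The second auxiliary ODE reduces to $2f_2'(y)=0$ in the Killing and homothetic cases, fixing $f_2$ as a constant, while in the affine case it becomes $2f_2'(y)+c_4=0$, so that $f_2$ is linear in $y$ and supplies the extra term $c_3y$ in $X_1$. Reassembling \eqref{eqKillvector}, \eqref{eqHomvector} and \eqref{eqAffvector} then yields the stated expressions, with the homothety parameter $\eta$ and the affine constants entering exactly as in Theorem~\ref{cflatW}.

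The Ricci and curvature collineation statements are the clean part and follow directly from cases~(iv) and~(v) of Theorem~\ref{cflatW}. For Ricci collineations one simply reads off $X_3=c_1/\sqrt{|p(y)|}=c_1/\sqrt{|\alpha(y)|}$, with $X_1,X_2$ arbitrary. For curvature collineations one substitutes $p=-\alpha$ into the expression for $X_1$ in Theorem~\ref{cflatW}(v); the only nontrivial simplification is the coefficient $\frac{c_1p'(y)}{2p(y)\sqrt{|p(y)|}}$, which, using $p=-\alpha$ together with $\alpha'=c\,\alpha^{3/2}$, becomes $\frac{c_1\alpha'(y)}{2\alpha(y)\sqrt{\alpha(y)}}=\frac{c_1c}{2}$. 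This produces the $t$-coefficient $2f_1(y)+\frac{c_1c}{2}$ and, together with $X_2=f_1(y)x+f_2(y)$ and $X_3=c_1/\sqrt{|\alpha(y)|}$, the full statement. In summary, once $\mathcal P_c$ is recognized as conformally flat, the entire proof is substitution into Theorem~\ref{cflatW} followed by the replacement $\alpha'=c\,\alpha^{3/2}$, with the genuine difficulty concentrated in the constraint $2c_1+c\,(c_1y+c_2)\sqrt{\alpha}=0$ that governs $X_3$ for the metric symmetries.
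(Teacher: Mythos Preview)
Your approach is exactly the paper's own: observe that $f(x,y)=-x^2\alpha(y)$ has the conformally flat form $p(y)x^2+q(y)x+r(y)$ with $p=-\alpha$, $q=r=0$, and then specialize Theorem~\ref{cflatW}, using the structural relation $\alpha'=c\,\alpha^{3/2}$ to simplify. The paper offers no further detail than this, so your outline is in fact more explicit than the original.

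Your flagged obstacle, however, is not merely delicate; it points to a genuine omission in the statement you are trying to prove. For $c\neq 0$ the ODE $\alpha'=c\,\alpha^{3/2}$ integrates to $\alpha(y)=4/(cy+K)^2$, and substituting this into $2c_1+c(c_1y+c_2)\sqrt{\alpha}=0$ does \emph{not} force $c_1=c_2=0$: it yields only the linear relation $c_1K=c\,c_2$, hence a one-parameter family of solutions with $X_3=(c_1/c)(cy+K)\neq 0$. The corresponding Killing field (with $X_1=-c_1t-h'(y)x+\mathrm{const}$, $X_2=h(y)$ as dictated by~\eqref{eqKillvector}) is precisely what local homogeneity demands, since the three Killing fields listed in the theorem all have $X_3=0$ and therefore cannot act transitively in the $\partial_y$-direction. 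The same omission propagates verbatim to the homothetic and affine clauses. So the difficulty you isolated is real, and the route through Theorem~\ref{cflatW} will not deliver $X_3=0$ as stated; it delivers the larger (correct) Killing algebra. Your treatment of the Ricci and curvature collineation parts, by contrast, goes through cleanly and matches Theorem~\ref{cflatW}(iv),(v) after the substitution $\alpha'=c\,\alpha^{3/2}$ you describe.
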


\begin{Theorem}\label{CW}
Let $(M,g_f)$ be a homogeneous three-dimensional Lorentzian strictly Walker mani\-fold of type $\mathcal{CW}_\varepsilon$, that is, determined by $f(x,y)=-\varepsilon x^2$. Consider the functions~$s(y)$,~$t(y)$ given by
\begin{gather*}s(y)=
\begin{cases}
c_2\sin(y)-c_1\cos(y) & \text{if} \ \varepsilon=1, \\ c_1e^{-y}-c_2e^{y} & \text{if} \ \varepsilon=-1 ,
\end{cases} \qquad
t(y)=
 \begin{cases}
c_1\sin(y)+c_2\cos(y) & \text{if} \ \varepsilon=1, \\ c_1e^{-y}+c_2e^{y} & \text{if} \ \varepsilon=-1.
\end{cases}
\end{gather*}
Then, an arbitrary smooth vector field $X=X_1\partial_t+X_2\partial_x+X_3\partial_y$ on $M$:
\begin{itemize}\itemsep=0pt
\item is Killing if and only if
\begin{gather*}
X_1=s(y)x+c_3,\qquad X_2=t(y),\qquad X_3=c_4,
\end{gather*}
\item is homothetic if and only if
\begin{gather*}
X_1=s(y)x+c_3+\eta t,\qquad X_2=t(y)+\frac{\eta}{2}x,\qquad X_3=c_4,
\end{gather*}
\item is affine if and only if
\begin{gather*}
X_1= s(y)x+c_3+c_5t+ c_6y,\qquad X_2=t(y)+\frac{c_5}{2}x,\qquad X_3=c_4,
\end{gather*}
\item is a Ricci collineation if and only if $X_3=c_1$.

\item is a curvature collineation if and only if
\begin{gather*}
X_1=2f_1(y)t-\frac12f_1 '(y) x^2-f_2 '(y) x+f_3(y),\qquad X_2=f_1(y)x+f_2(y),\qquad X_3=c_1.
\end{gather*}
In the above {equations, $f_i(y)$ are arbitrary smooth functions on $M$ and $c_j $} real constants.
\end{itemize}
\end{Theorem}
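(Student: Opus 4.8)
Theorem \ref{CW} asks for the symmetries of the specific metric $f(x,y) = -\varepsilon x^2$. Since this is a polynomial of degree two in $x$ with $p(y) = -\varepsilon$, $q(y) = 0$, $r(y) = 0$, the manifold is conformally flat and I can invoke Theorem \ref{cflatW} directly rather than restart from the defining PDE systems. Let me think about how each case specializes.

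For the Killing case (Theorem \ref{cflatW}(i)): the constraint $(c_1 y + c_2)p'(y) + 2c_1 p(y) = 0$ becomes $2c_1(-\varepsilon) = 0$ since $p' = 0$, forcing $c_1 = 0$. Then the two ODEs for $f_1, f_2$ reduce: the first becomes $2f_1''(y) - 2f_1(y)(-\varepsilon) = 0$, i.e. $f_1''(y) + \varepsilon f_1(y) = 0$; the second (with $q = r = 0$) becomes $2f_2'(y) = 0$, so $f_2$ is constant. The ODE $f_1'' + \varepsilon f_1 = 0$ is exactly what produces the trigonometric ($\varepsilon = 1$) or exponential ($\varepsilon = -1$) solutions defining $t(y)$. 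So $X_2 = f_1(y) = t(y)$, and $X_3 = c_1 y + c_2 = c_4$ (constant, since $c_1 = 0$). The component $X_1 = -xf_1'(y) + f_2(y)$; I need to check that $-f_1'(y) = s(y)$, i.e. that $s(y) = -t'(y)$. Differentiating the stated $t(y)$ confirms this in both cases, so $X_1 = s(y)x + c_3$.

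For homothetic and affine: in Theorem \ref{cflatW}(ii) the same constraint again forces $c_1 = 0$ (so $X_3$ stays constant and the $\eta$-term survives only in $X_1, X_2$), and the ODEs again give $f_1'' + \varepsilon f_1 = 0$ and $f_2' = 0$, exactly reproducing the homothetic statement with the extra $\eta t$ and $\frac{\eta}{2}x$ terms. The affine case (Theorem \ref{cflatW}(iii)) runs identically with the constant $c_4$ absorbed into the linear-in-$y$ term $c_6 y$ of $X_1$. For the curvature collineation case (v), substituting $p(y) = -\varepsilon$ kills the term $\frac{c_1 p'(y)}{2p(y)\sqrt{|p(y)|}}t$ since $p' = 0$, leaving precisely the stated $X_1 = 2f_1(y)t - \frac12 f_1'(y)x^2 - f_2'(y)x + f_3(y)$; and the Ricci case (iv) gives $X_3 = c_1/\sqrt{|p(y)|} = c_1/\sqrt{\varepsilon}$, a constant, renamed $c_1$.

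\begin{proof}
Since $f(x,y)=-\varepsilon x^2$ is of the form $p(y)x^2+q(y)x+r(y)$ with $p(y)=-\varepsilon$ constant and $q(y)=r(y)=0$, the manifold is locally conformally flat and all its symmetries are obtained by specializing Theorem~\ref{cflatW}. We treat each case in turn.

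\emph{Killing and homothetic vector fields.} In both Theorem~\ref{cflatW}(i) and~(ii) the integrability condition reads $(c_1y+c_2)p'(y)+2c_1p(y)=0$, which here reduces to $-2\varepsilon c_1=0$, forcing $c_1=0$. Consequently $X_3=c_1y+c_2$ is a constant, which we rename $c_4$. With $q=r=0$ and $c_1=0$, the two determining ODEs become $f_1''(y)+\varepsilon f_1(y)=0$ and $f_2'(y)=0$ (in the homothetic case the $\eta$-dependent terms multiply $q$ and $r$ and hence drop out as well). The equation $f_1''+\varepsilon f_1=0$ has general solution $f_1(y)=c_1\sin y+c_2\cos y$ when $\varepsilon=1$ and $f_1(y)=c_1e^{-y}+c_2e^{y}$ when $\varepsilon=-1$; in either case this is exactly the function $t(y)$ defined in the statement, so $X_2=f_1(y)=t(y)$. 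Moreover $f_2(y)$ is a constant, renamed $c_3$. By~\eqref{eqKillvector}, respectively~\eqref{eqHomvector}, the first component equals $X_1=-xf_1'(y)+f_2(y)$ in the Killing case and $X_1=\eta t-xf_1'(y)+f_2(y)$ in the homothetic case. A direct differentiation of the two expressions for $t(y)$ shows that $-t'(y)=s(y)$ in both signs of $\varepsilon$, whence $-xf_1'(y)=s(y)x$. This yields the stated forms of Killing and homothetic vector fields.

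\emph{Affine vector fields.} We apply Theorem~\ref{cflatW}(iii). The condition $(c_2y+c_3)p'(y)+2c_2p(y)=0$ becomes $-2\varepsilon c_2=0$, so $c_2=0$ and $X_3=c_2y+c_3$ is again a constant, renamed $c_4$. With $q=r=0$ and $c_2=0$, the determining ODEs reduce to $f_1''(y)+\varepsilon f_1(y)=0$ and $2f_2'(y)+c_4=0$, where here $c_4$ is the affine constant of~\eqref{eqAff}; thus $f_1(y)=t(y)$ as before and $f_2(y)$ is an affine function of $y$. By~\eqref{eqAffvector}, $X_1=c_3t-xf_1'(y)+f_2(y)$ and $X_2=\frac{c_1+c_3}{2}x+f_1(y)$, which after renaming the free constants (and using $-f_1'(y)=s(y)$) give precisely the stated affine field, the linear-in-$y$ part of $f_2$ appearing as the term $c_6y$ in $X_1$ and the coefficient of $t$ in $X_1$ appearing as $c_5$.

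\emph{Ricci and curvature collineations.} By Theorem~\ref{cflatW}(iv), $X$ is a Ricci collineation if and only if $X_3=c_1/\sqrt{|p(y)|}=c_1/\sqrt{\varepsilon}$, a constant which we rename $c_1$; the components $X_1,X_2$ remain free. Finally, by Theorem~\ref{cflatW}(v), $X$ is a curvature collineation if and only if $X_3=c_1/\sqrt{|p(y)|}=c_1$ together with $X_2=f_1(y)x+f_2(y)$ and $X_1=2f_1(y)t+\frac{c_1p'(y)}{2p(y)\sqrt{|p(y)|}}t-\frac12f_1'(y)x^2-f_2'(y)x+f_3(y)$. Since $p(y)=-\varepsilon$ is constant, $p'(y)=0$, so the second term in $X_1$ vanishes, leaving the stated expression. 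This completes the proof.
\end{proof}
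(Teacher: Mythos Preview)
Your proof is correct and follows exactly the approach indicated in the paper, which derives Theorem~\ref{CW} by specializing Theorem~\ref{cflatW} to $p(y)=-\varepsilon$, $q(y)=r(y)=0$. The only cosmetic slip is writing $c_1/\sqrt{\varepsilon}$ in the Ricci case instead of $c_1/\sqrt{|p(y)|}=c_1/\sqrt{|{-\varepsilon}|}=c_1$, but the conclusion is unaffected.
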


It is well known that for three-dimensional manifolds and locally conformally f\/lat manifolds, the curvature is completely determined by its Ricci curvature. However, as a consequence of Theo\-rem~\ref{cflatW} (in particular, Theorems~\ref{Pc} and~\ref{CW}), we f\/ind explicit examples of {three-dimensional} locally conformally f\/lat spaces, for which Ricci and curvature collineations are not equivalent.

\subsection*{Acknowledgements}

First author partially supported by funds of the University of Salento and MIUR (PRIN). Second author partially supported by funds of the University of Payame Noor. The authors wish to thank the anonymous referees for their valuable suggestions and comments.

\pdfbookmark[1]{References}{ref}
\LastPageEnding

\end{document}